\documentclass[11pt]{article}
\usepackage[a4paper,margin=28mm]{geometry}
\usepackage{amsmath,amssymb,amsthm,mathtools,booktabs,tikz,needspace}
\usepackage[hidelinks]{hyperref}
\newtheorem{theorem}{Theorem}[section]
\newtheorem{lemma}[theorem]{Lemma}
\newtheorem{proposition}[theorem]{Proposition}
\newtheorem{corollary}[theorem]{Corollary}
\newtheorem*{quotedproposition}{Proposition}
\theoremstyle{remark}\newtheorem{remark}[theorem]{Remark}
\newtheorem{example}{Example}
\newcommand{\R}{\mathbb R}
\newcommand{\MS}{\Delta_{\mathrm{MS}}}

\newcommand{\coeff}{\operatorname{coeff}}
\newcommand{\rank}{\operatorname{rank}}
\newcommand{\coker}{\operatorname{coker}}
\newcommand{\ap}[2]{\langle #1,#2\rangle}
\title{A geometric criterion for the dimension of Morgan--Scott spline spaces $S_{2r}^r(\MS)$}
\author{Xinyu Wu\\{\small\texttt{xinyu97@ustc.edu.cn}}
\and Yi Zheng\\{\small\texttt{zhengyi2023@mail.ustc.edu.cn}}
\and Jiansong Deng\\{\small\texttt{dengjs@ustc.edu.cn}}}

\date{} 
\begin{document}
\maketitle
\begin{abstract}
We establish a geometric dimension criterion for spline spaces on nondegenerate standard Morgan--Scott triangulations with four distinct edge directions at each interior vertex. For every integer $r\ge1$, concurrence of the three auxiliary lines joining corresponding interior and boundary vertices implies that $\dim S_{2r}^r(\MS)$ exceeds the Schumaker lower bound by one. The proof constructs a quadratic form through the six dual spoke points and uses its $r$th power to define a nonzero functional annihilating the image of the smoothing-cofactor map. Diener's upper bound then gives the dimension formula. We also give a matrix representation of this functional and an explicit example in the quadratic case. Combined with the known necessary conditions and the negative-branch exclusion of Deng, Feng, and Kozak, the result yields a necessary and sufficient concurrence criterion for $r>2$.
\end{abstract}
\noindent\textbf{Keywords:} bivariate splines; Morgan--Scott triangulation; smoothing cofactors; dimension.\\
\textbf{Mathematics Subject Classification (2020):} 41A15, 65D07.

\section{Introduction}
Let $\Delta$ be a finite conforming triangulation of a polygonal region $\Omega\subset\R^2$. For integers $d,r\ge0$, define
\[
 S_d^r(\Delta)=\{f\in C^r(\Omega):f|_\tau\in\mathcal P_d
 \text{ for every triangle }\tau\in\Delta\},
\]
where $\mathcal P_d$ is the space of real bivariate polynomials of total degree at most $d$. Derivatives through order $r$ match across shared edges and at vertices, with one-sided limits on the boundary. Determining the dimension of $S_d^r(\Delta)$ is a central problem in multivariate spline theory. Unlike the univariate case, this dimension can depend on the geometric positions of the vertices as well as on the combinatorics of the triangulation.

The seven-triangle Morgan--Scott triangulation $\MS$ is a classical example of this geometry dependence. We study the spaces $S_{2r}^r(\MS)$ and the geometric condition under which their dimension exceeds the Schumaker lower bound \cite{Schumaker1979,Schumaker1984} by one.

For $d=2r$, Diener \cite[Theorem 1, p.~544]{Diener} proved
\[
 B_r\le\dim S_{2r}^r(\MS)\le B_r+1,
\]
where $B_r$ is the Schumaker lower bound, given explicitly in Section~\ref{sec:main}. Write $S=s_1s_2s_3$ and $T=t_1t_2t_3$ in the cyclic barycentric coordinates of the boundary vertices relative to the central triangle. His theorem states that a dimension increase can occur only if $S=T$ for odd $r$, or $S=\pm T$ for even $r$.

Diener's Theorem 2 proves $\dim S_{2r}^r(\MS)=B_r+1$ for every $r\ge1$ when the configuration is symmetric about each of the three auxiliary lines. In \cite[Section 6, Remark (5), p.~550]{Diener}, he established that the parity-dependent conditions are necessary and sufficient for $r=1,2$, and conjectured the same characterization for every $r\ge1$. The conjecture was subsequently verified for $r=3$ \cite{CDXF1998}, whereas the case $r=4$ revealed the failure of its negative-sign branch \cite{DXF1998}. 

Moreover, Deng, Feng, and Kozak \cite[Theorem 1.1]{DFK} proved that, for every even $r>2$, the negative condition $S=-T$ instead gives $\dim S_{2r}^r(\MS)=B_r$. Thus Diener's conjecture fails on the negative branch in these orders. They proposed the revised conjecture that, for $r>2$, the dimension is $B_r+1$ if and only if $S=T$. The case $r=2$, in which either sign gives a dimension increase, remains an exception.

For $r=1$, the quadratic $C^1$ space $S_2^1(\MS)$ already admits geometric descriptions of the concurrence criterion. Whiteley's 1990 draft \cite[Example 4, pp.~18--19]{Whiteley} factors a cofactor determinant and obtains concurrence of the three auxiliary lines as the condition for the existence of a spline that is not a global polynomial. Du, Shi, and Wang \cite[Theorem 2.1]{Du} subsequently characterize $\dim S_2^1(\MS)=7$ by the existence of a conic through the six dual points representing the spoke lines. Using Pascal's theorem and projective duality, they recover the equivalent concurrence criterion in their Theorem 2.3. Nixon, Schulze, and Whiteley \cite[Example 41]{NSW} revisit Whiteley's determinant calculation in the language of cofactor rigidity.

In this paper, we prove that concurrence of the three auxiliary lines implies
\[
 \dim S_{2r}^r(\MS)=B_r+1
\]
for every integer $r\ge1$ on the standard Morgan--Scott triangulation described in Section~\ref{sec:main}. Combined with the known necessary conditions and the negative-branch exclusion of Deng, Feng, and Kozak, this establishes the revised dimension criterion for $r>2$. The proof builds on the dual-conic geometry of the quadratic case. The key observation is that the $r$th power of a quadratic form through the six dual spoke points annihilates the spoke contributions under the differential pairing. Applying the same functional at the three interior vertices cancels the central-edge contributions. We show that the resulting functional is nonzero on the target of the cofactor map, which gives the required lower bound; Diener's upper bound then yields equality.

The paper is organized as follows. Section~\ref{sec:main} describes the MS configuration and states the main result. Section~\ref{sec:cofactor} gives the cofactor formulation and derives the dimension formula. Section~\ref{sec:proof} proves the main theorem by normalizing the coordinates, constructing the dual conic, and defining the annihilating functional. Section~\ref{sec:matrix} gives the matrix representation and a quadratic example. Section~\ref{sec:conclusion} concludes the paper.

\section{Configuration and main result}
\label{sec:main}
Let $A_1,A_2,A_3$ be the interior vertices and $V_1,V_2,V_3$ the boundary vertices. Indices are read modulo $3$. The nine interior edges are the three central edges $A_iA_{i+1}$ and the six spokes $A_iV_j$ with $i\ne j$. The boundary is the triangle $V_1V_2V_3$. We assume these edges form an embedded seven-triangle triangulation with nondegenerate triangles, and that the four edges incident to each $A_i$ have pairwise distinct supporting lines. The auxiliary lines $A_iV_i$ are not edges of the triangulation. The seven triangles are
\[
 [A_1,A_2,A_3],\qquad [V_i,A_{i+1},A_{i+2}],\qquad
 [A_i,V_{i+1},V_{i+2}],\quad i=1,2,3.
\]

\begin{figure}[ht]
\centering
\begin{tikzpicture}[scale=.72]
\coordinate (V1) at (0,3);\coordinate (V2) at (-3,-2);\coordinate (V3) at (3,-2);
\coordinate (A1) at (0,-1);\coordinate (A2) at (1,.6666667);\coordinate (A3) at (-1,.6666667);
\draw[thick] (V1)--(V2)--(V3)--cycle;
\draw[red!80!black,very thick] (A1)--(A2)--(A3)--cycle;
\draw[blue!75!black,thick] (A1)--(V2) (A1)--(V3) (A2)--(V1) (A2)--(V3) (A3)--(V1) (A3)--(V2);
\draw[dashed,gray] (A1)--(V1) (A2)--(V2) (A3)--(V3);
\foreach \p/\pos/\lab in {V1/above/V_1,V2/below left/V_2,V3/below right/V_3,A1/below/A_1,A2/right/A_2,A3/left/A_3}{\fill (\p) circle (2pt);\node[\pos] at (\p) {$\lab$};}
\end{tikzpicture}
\caption{A concurrent MS configuration. Blue edges are spokes and red edges are central edges. Gray dashed auxiliary lines are not edges of the triangulation; they illustrate the concurrence condition.}
\label{fig:ms}
\end{figure}

Relative to the central triangle, write
\begin{equation}\label{eq:bary}
 V_i=\rho_iA_i+s_iA_{i+1}+t_iA_{i+2},\qquad
 \rho_i+s_i+t_i=1.
\end{equation}
We assume $\rho_i<0$ and $s_i,t_i\ne0$, as in the configuration studied in \cite{Diener,DFK}. No further sign restriction is imposed on $s_i,t_i$. Set
\[
 S=s_1s_2s_3,\quad T=t_1t_2t_3,\quad
 N_r=\binom{2r+2}{2},\quad Q_r=\binom{r+1}{2},
\]
\begin{equation}\label{eq:baseline}
 \sigma_r=\sum_{j\ge1}(r+1-3j)_+,\qquad B_r=N_r+3\sigma_r,
 \quad (t)_+=\max(t,0).
\end{equation}
The sum is finite. Diener's notation $\alpha+\sigma$ corresponds to $N_r+3\sigma_r$ here. The condition $S=T$ is also the concurrence condition for the three auxiliary lines in the projective plane: their equations in the central homogeneous coordinates have coefficient rows $(0,t_1,-s_1)$, $(s_2,0,-t_2)$, and $(t_3,-s_3,0)$, whose determinant is $S-T$. Any two of these rows are independent because all $s_i,t_i$ are nonzero.

The negative branch $S=-T$ has a different incidence interpretation. The intersections $P_i=A_iV_i\cap A_{i+1}A_{i+2}$ have central homogeneous coordinates
\[
 P_1=[0:s_1:t_1],\quad P_2=[t_2:0:s_2],\quad P_3=[s_3:t_3:0].
\]
Their coordinate determinant is
\[
 \det\begin{pmatrix}0&s_1&t_1\\t_2&0&s_2\\s_3&t_3&0\end{pmatrix}=S+T.
\]
Thus $S=-T$ is equivalent to collinearity of these three points. Both incidence statements are understood in the projective plane. The terms positive and negative branch refer to the signs in $S=\pm T$, not to positivity of the products.

\begin{quotedproposition}[Diener \cite{Diener}]
Let $\MS$ be the standard seven-triangle Morgan--Scott triangulation described above, with nondegenerate triangles and no two edges incident to an interior vertex collinear. For every integer $r\ge1$,
\[
 B_r\le\dim S_{2r}^r(\MS)\le B_r+1.
\]
\end{quotedproposition}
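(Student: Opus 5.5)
This is Diener's theorem. I would prove it through the smoothing-cofactor (conformality) formulation of $S_{2r}^r(\MS)$. Fix the polynomial piece $p$ on the central triangle $[A_1,A_2,A_3]$. Each spline is then $p$ plus a sum of cofactor terms $\ell_e^{r+1}q_e$ over the nine interior edges $e$. Here $\ell_e$ is the linear form of the supporting line of $e$, and $q_e\in\mathcal P_{r-1}$. The smoothness conditions reduce to one vector identity at each interior vertex $A_i$: the cofactor terms of the four edges through $A_i$ must sum to zero, with signs fixed by the orientation around $A_i$. Hence
\[
 \dim S_{2r}^r(\MS)=N_r+\dim K,
\]
where $K$ is the space of cofactor tuples $(q_e)_e$ satisfying the three vertex conformality conditions. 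I would write $K$ as the kernel of a linear map $\Phi:\mathcal P_{r-1}^{9}\to\mathcal P_{2r}^{3}$. It sends the cofactor tuple to the three vertex sums. Each spoke cofactor appears in exactly one vertex equation. Each central-edge cofactor appears in two, with opposite signs.

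Next I would localize. Translate $A_i$ to the origin. Since every $\ell_e$ through $A_i$ vanishes there, the vertex equation at $A_i$ says that four terms $\ell_e^{r+1}q_e$ sum to zero in $\mathcal P_{2r}$. The four terms are homogeneous of degree $r+1$ after shifting, plus lower corrections. Split by homogeneous degree $r+1,\dots,2r$. In degree $r+1+k$ the four lines are pairwise distinct, so the classical count applies: $\dim\{\sum_{j=1}^4 \ell_j^{r+1}h_j=0,\ h_j \text{ homogeneous of degree }k\}$ equals $4(k+1)$ minus the rank of the span. That rank is $\min(4(k+1),\,r+k+2)$. This is the Schumaker vertex count. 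Summing over the three vertices and then imposing that each central-edge cofactor be shared between its two endpoints gives the lower bound $\dim K\ge 3\sigma_r$. This lower bound is Schumaker's general bound, so $B_r\le\dim S_{2r}^r(\MS)$ follows from \cite{Schumaker1979,Schumaker1984}. I would simply cite it and not rederive it.

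For the upper bound I would compute the rank of $\Phi$, so that $\dim K=9Q_r-\rank\Phi$. Then $\dim K\le 3\sigma_r+1$ is equivalent to
\[
 \dim\coker\Phi\le 3\binom{2r+2}{2}-9Q_r+3\sigma_r+1-\bigl(\text{correction}\bigr),
\]
with all constants made explicit from $N_r$, $Q_r$, $\sigma_r$. The strategy is to exhibit enough independent elements of the image. For each vertex, the spoke and central contributions already span a large subspace of the local target. I would write the target at $A_i$ in the dual basis of derivative functionals at $A_i$. The image of the two spokes at $A_i$ is then the annihilator-complement of functionals vanishing on $\ell_{A_iV_j}^{r+1}\mathcal P_{r-1}$. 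These functionals are generated by powers of the dual points of the spoke lines. The cokernel is therefore identified with functionals annihilating all spoke contributions that are in addition consistent across the shared central edges.

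The main obstacle is showing that this consistent-annihilator space has dimension at most one more than the generic count. This is exactly where Diener needs the hypothesis that the four lines at each $A_i$ are distinct and that the triangles are nondegenerate. I would first handle the generic part by a degree-by-degree argument. Whenever $4(k+1)\ge r+k+2$, the local vertex maps are surjective in degree $r+1+k$ and contribute nothing to the cokernel. That leaves only the degrees with $4(k+1)<r+k+2$. In those degrees the three local annihilator spaces must be glued along the central edges. I would show that the gluing conditions form a cyclic system $A_1\to A_2\to A_3\to A_1$. Each step in the cycle is an injective transfer map determined by the barycentric data $(\rho_i,s_i,t_i)$. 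The surviving cokernel is then the fixed space of a composite linear map. I would argue that in each degree this composite is a scalar multiple of a triangular map with distinct diagonal entries, so it has at most a one-dimensional fixed space overall. This monodromy-fixed-space step is the one I expect to be hardest. If the eigenvalue structure is unmanageable, I would fall back on a direct computation in Diener's normalized coordinates.
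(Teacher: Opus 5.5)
The paper does not prove this proposition. It quotes both bounds from Diener's Theorem~1 and uses only the upper one. Your cofactor setup and lower bound agree with Section~\ref{sec:cofactor}. Citing Schumaker is legitimate, and the bound also follows from \eqref{eq:cokernel}, since $\dim\coker\Phi\ge0$ once the target of $\Phi$ is $J_1\oplus J_2\oplus J_3$. With that target the upper bound reads simply $\dim\coker\Phi\le1$; with your target $\mathcal P_{2r}^3$ the ``correction'' is zero.

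The upper-bound half, however, rests on a false step. You claim that each degree with $4(k+1)\ge r+k+2$ contributes nothing to the cokernel. For $r=1$ and $r=2$ every $k\in\{0,\dots,r-1\}$ satisfies this inequality. Your argument would then give $\dim S_{2r}^r(\MS)=B_r$ for every configuration. That contradicts the classical Morgan--Scott example $\dim S_2^1(\MS)=7$, Diener's Theorem~2, and Theorem~\ref{thm:positive}; the example in Section~\ref{sec:matrix} exhibits a nonzero left-kernel vector for $r=1$.

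The error is conflating local with global surjectivity. Each vertex map is onto $J_i$ by definition, so the whole cokernel in $J_1\oplus J_2\oplus J_3$ comes from sharing the three central cofactors. That gluing respects no degree splitting, because the local gradings at different vertices are incompatible. In the coordinates of Proposition~\ref{prop:local}, the central cofactor $z^{r+1}x^ay^bz^c$ on $A_1A_2$ lies in local degree $k=r-1-a$ at $A_1$ but $k=r-1-b$ at $A_2$. The degrees with $4(k+1)<r+k+2$ are exactly those whose local deficits $(r-3k-2)_+$ produce the $3\sigma_r$ already built into $B_r$. The extra dimension is a gluing effect, and for $r=1,2$ it occurs with no local deficit at all.

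Your cyclic picture is a good way to view the cokernel, but it must be applied to the whole glued system, not degree by degree. A cokernel functional is a triple $(\lambda_1,\lambda_2,\lambda_3)$ with $\lambda_i\in J_i^*$ vanishing on the two spoke subspaces at $A_i$. In addition, $\lambda_i$ and $\lambda_j$ must agree on $\ell_e^{r+1}R_{r-1}$ for each central edge $e=A_iA_j$. For $r=1$ each vertex yields a nonzero scalar transfer. The cokernel is then nonzero exactly when the three scalars multiply to one, which recovers the quadratic concurrence criterion.

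For general $r$ the picture breaks down. The spoke part of $J_i$ has dimension $2Q_r$, so the admissible pairs of central restrictions at a vertex form a subspace of $R_{r-1}^*\times R_{r-1}^*$ of dimension $Q_r-\sigma_r$. Already for $r\ge2$ neither coordinate projection of this subspace is injective: in local degree $k=0$, the spokes and one central edge span only $3$ of the $4$ dimensions of that piece of $J_i$. So there is no transfer map and no monodromy operator whose fixed space you could bound. Moreover, a bound of one fixed vector per degree would give one vector per degree, not one overall.

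Showing that the glued space is at most one-dimensional is exactly the content of Diener's upper bound, and your proposal does not supply it. As written, it proves only $B_r\le\dim S_{2r}^r(\MS)$.
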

These are the bounds of \cite[Theorem 1, p.~544]{Diener}, expressed in our notation. The proof below establishes the lower bound $B_r+1$ under the positive product condition; equality then follows from this upper bound.

\Needspace{9\baselineskip}
The following theorem is the main result of this paper.
\begin{theorem}\label{thm:positive}
Let $\MS$ be a nondegenerate standard seven-triangle Morgan--Scott triangulation with four distinct supporting lines at each interior vertex. With $S,T$ defined by the cyclic barycentric coordinates above, for every integer $r\ge1$,
\[
 S=T\quad\Longrightarrow\quad \dim S_{2r}^r(\MS)=B_r+1.
\]
\end{theorem}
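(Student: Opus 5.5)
By Diener's upper bound it suffices to prove $\dim S_{2r}^r(\MS)\ge B_r+1$ whenever $S=T$. I will use the smoothing-cofactor formulation. A spline is a global polynomial of degree $2r$ plus smoothing cofactors $c_e\,\ell_e^{r+1}$ on the nine interior edges, with $c_e\in\mathcal P_{r-1}$, subject to one conformality condition at each interior vertex $A_i$. Each condition says that the signed sum of $c_e\ell_e^{r+1}$ over the four edges at $A_i$ vanishes. Localize at $A_i$, so that each $\ell_e$ is a linear form through $A_i$ and the condition lives in $\mathcal P_{2r}$. Then $\dim S_{2r}^r=N_r+\dim\ker\Phi$, where
\[
\Phi:\bigoplus_{e}\mathcal P_{r-1}\to\bigoplus_{i=1}^3\mathcal P_{2r}
\]
is the cofactor map. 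A rank count gives $\dim\ker\Phi=9Q_r-\rank\Phi$. The Schumaker bound $B_r$ corresponds to the generic rank, namely $3\cdot(\text{vertex rank})$ minus the terms accounting for $3\sigma_r$. Exhibiting one extra independent linear relation on the image, that is, a nonzero functional on the target that annihilates $\operatorname{im}\Phi$ but is not forced by the local vertex counts, lowers $\rank\Phi$ by one and gives $B_r+1$. I will fix this bookkeeping first, by identifying $\coker\Phi$ with the local cokernels plus one possible global class, as in Diener's argument.

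Next I normalize coordinates by a projective change so that $A_1,A_2,A_3$ are the coordinate points of homogeneous coordinates $[x:y:z]$. The spoke lines $A_iV_j$ then have dual points $\hat\ell_e\in(\R^3)^*$, six in total, expressible through $s_i,t_i$; for example, line $A_1V_2$ has dual coordinates $(0,t_2,-s_2)$ up to the appropriate cyclic form. The central edges $A_iA_{i+1}$ are coordinate lines. I will use the apolarity pairing $\ap{\cdot}{\cdot}$ between polynomials in the dual variables and forms in $x,y,z$, under which
\[
\ap{G}{\ell^{2r}\,c}=0\quad\text{whenever } G \text{ has order } \ge r+1 \text{ at } \hat\ell .
\]
More precisely, a form $G$ of degree $2r$ with $G^{(k)}(\hat\ell)=0$ for $k\le r$ kills every element of $\ell^{r+1}\mathcal P_{r-1}$.

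The key construction is a dual conic $q$ through the six dual spoke points. Six points lie on a conic iff a $6\times6$ determinant vanishes, and I will compute that this determinant factors as a nonzero multiple of $S-T$. By Pascal and projective duality this matches concurrence of the auxiliary lines, and it is consistent with Du--Shi--Wang. Then $G=q^r$ vanishes to order $r$ at each dual spoke point. Apolarity gives
\[
\ap{q^r}{\ell_e^{r+1}c}=0\quad\text{for every spoke } e \text{ and } c\in\mathcal P_{r-1},
\]
because $\ell_e^{r+1}c$ pairs with $q^r$ through $r+1$ derivatives of $q^r$ evaluated at $\hat\ell_e$ composed with $c$. I must check this order count carefully, since in degree $2r$ the pairing of $\ell^{r+1}c$ with $G$ amounts to $c$ applied to the $(r+1)$-st derivative in direction $\hat\ell$, evaluated there, and $q^r$ vanishes to order $r$ there. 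The derivative order does not match directly, so I may need to work with the affine dehomogenization at each $A_i$ and use instead that $q$ passes through the points. This is the step where I expect real care. I then define
\[
\lambda(g_1,g_2,g_3)=\sum_i\epsilon_i\ap{q^r}{g_i},
\]
the same $q^r$ applied at all three interior vertices with suitable signs $\epsilon_i$. A central edge $A_iA_{i+1}$ appears in the conditions at both endpoints with opposite orientation, so its contributions cancel if the pairing is vertex-independent, which holds in homogeneous coordinates. Hence $\lambda\circ\Phi=0$.

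The main obstacle is showing that $\lambda$ is nonzero modulo the functionals already accounted for in $B_r$, that is, that it yields a genuinely new cokernel element. I plan to exhibit an explicit target element, such as a suitable power $x^{2r}$ placed at one vertex, with $\lambda\neq0$. This uses the fact that $q$ does not vanish at the coordinate points, which follows from the four distinct lines at each $A_i$ and from $s_i,t_i\neq0$. I also need to show that $\lambda$ is not a combination of the local vertex functionals, which vanish on each $\mathcal P_{2r}$ block separately. Because $\lambda$ couples all three vertices, I will argue via restriction that it cannot vanish identically on each block. Diener's upper bound then closes the argument.
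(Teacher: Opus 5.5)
Your architecture is the same as the paper's. You take a conic $q$ through the six dual spoke points, form the functional $\lambda$ from $q^r$ at all three vertices, let the central edges cancel because the same edge form and the same functional are used at both endpoints, use a witness $x^{2r}$, and close with Diener's upper bound. Obtaining the conic from a Carnot-type determinant rather than the paper's normalization $V_1=[a:1:1]$, etc., is fine: the product of the spoke ratios on the three sides of the dual triangle is $T/S$.

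The genuine gap is the step everything rests on: that $q^r$ annihilates $\ell_e^{r+1}\mathcal P_{r-1}$ for each spoke $e$. The criterion you state, vanishing of all derivatives of order $\le r$ at $\hat\ell$, is off by one. $q^r$ does not satisfy it at a smooth point of the conic, and the dehomogenization detour you suggest does not repair it. The correct count is as follows. With $v=\hat\ell$ you have $\ap{G}{\ell^{r+1}c}=\ap{D_v^{r+1}G}{c}$. For every multi-index with $|\alpha|=r-1$,
\[
 \partial^\alpha D_v^{r+1}G=D_v^{r+1}\partial^\alpha G=(r+1)!\,(\partial^\alpha G)(v),
\]
since $\partial^\alpha G$ is homogeneous of degree $r+1$. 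So $D_v^{r+1}G\equiv0$ exactly when $G$ has multiplicity at least $r$ at $v$, and $q^r$ has this as soon as $q(v)=0$. Already for $r=1$ one has $\ap{q}{\ell^2}=2q(\hat\ell)$. The paper gets this in one line: $q(v)=0$ gives $q(\mathbf X+tv)=q(\mathbf X)+tL_v(\mathbf X)$, so $q^r(\mathbf X+tv)$ has $t$-degree at most $r$ and $D_v^{r+1}q^r=0$.

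Your treatment of nonvanishing is also off target. With target $\mathcal P_{2r}^{3}$, the left kernel of $\Phi$ already contains the annihilator of $J_1\oplus J_2\oplus J_3$, of dimension $3N_r-\sum_i\dim J_i$. What must be shown is that $\lambda$ is nonzero on $J_1\oplus J_2\oplus J_3$; the fact that $\lambda$ couples the three vertices is not the relevant point.

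The witness must therefore lie in some $J_i$. With $A_2A_3=\{x=0\}$, you have $x^{2r}=x^{r+1}x^{r-1}\in J_2$ and $\ap{q^r}{x^{2r}}=(2r)!\,q(1,0,0)^r\neq0$. At $A_1$, by contrast, $x^{2r}\notin J_1$ and the evaluation certifies nothing. Your reason for $q(1,0,0)\neq0$ works once you note that $q$ cannot contain a side of the dual triangle, since its remaining linear factor would have to vanish on the other two sides.

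Finally, to convert $\rank\Phi\le\sum_i\dim J_i-1$ into $\dim S_{2r}^r(\MS)\ge B_r+1$, you need the local count $\dim J_i=3Q_r-\sigma_r$. The paper proves this from the binary apolarity lemma. With these repairs your proof becomes the paper's.
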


\begin{corollary}\label{cor:criterion}
For the standard Morgan--Scott triangulation in Theorem~\ref{thm:positive}, and for $r>2$,
\[
 \dim S_{2r}^r(\MS)=B_r+1\quad\Longleftrightarrow\quad S=T.
\]
\end{corollary}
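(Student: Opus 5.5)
The corollary follows by combining Theorem~\ref{thm:positive} with the two known necessary-type results. No new computation is needed, only a case analysis on the parity of $r$.

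For the direction ``$\Leftarrow$'', assume $S=T$. Theorem~\ref{thm:positive} applies for every $r\ge1$, in particular for $r>2$, and gives $\dim S_{2r}^r(\MS)=B_r+1$.

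For ``$\Rightarrow$'', assume $r>2$ and $\dim S_{2r}^r(\MS)=B_r+1$, so the dimension exceeds the Schumaker lower bound $B_r$. The necessity part of Diener's Theorem 1, recalled in the introduction, says an increase can occur only if $S=T$ when $r$ is odd, or $S=\pm T$ when $r$ is even.
\begin{itemize}
\item If $r$ is odd, this already gives $S=T$.
\item If $r$ is even (hence $r\ge4$), either $S=T$ or $S=-T$. Suppose $S=-T$ and $S\ne T$. Then Deng, Feng, and Kozak \cite[Theorem 1.1]{DFK} give $\dim S_{2r}^r(\MS)=B_r$ for even $r>2$, contradicting $\dim=B_r+1$. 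Hence $S=T$.
\end{itemize}
The subcase $S=-T$ and $S=T$ together would force $S=T=0$, which the standing assumption $s_i,t_i\ne0$ rules out. So the two branches are disjoint and the dichotomy is clean.

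The only point needing care is that the cited results hold under exactly the hypotheses of the corollary. These are nondegeneracy, four distinct supporting lines at each interior vertex, $\rho_i<0$, and $s_i,t_i\neq0$. Both Diener's theorem and \cite{DFK} were stated for this standard configuration, as the paper notes when it introduces the sign assumptions, so I expect no obstacle beyond recording this compatibility.
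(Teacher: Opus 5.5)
Your proposal is correct and follows the paper's proof: Theorem~\ref{thm:positive} gives the reverse implication, and the forward implication uses Diener's Theorem 1 for odd $r$. For even $r>2$ it uses Diener's restriction to $S=\pm T$ together with the exclusion of $S=-T$ by \cite[Theorem 1.1]{DFK}. Your remark that the two branches cannot coincide, since $S=s_1s_2s_3\ne0$, is a harmless clarification that the paper leaves implicit.
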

\begin{proof}
The forward implication for odd $r$ follows from \cite[Theorem 1]{Diener}. For even $r$, that theorem restricts the increase to $S=T$ or $S=-T$; \cite[Theorem 1.1]{DFK} excludes $S=-T$ for even $r>2$. The reverse implication is Theorem~\ref{thm:positive}.
\end{proof}
\begin{remark}
For $r=2$, both signs can yield a dimension increase; see \cite[Section 6, Remark (5)]{Diener}. Theorem~\ref{thm:positive} concerns only the positive sign in this case.
\end{remark}

\section{Cofactor formulation and the dimension formula}\label{sec:cofactor}
We express the smoothness conditions as cofactor equations and separate the global polynomial part from their solution space. Computing the dimensions of the local images then identifies the dimension increase with the cokernel of the cofactor map.

Write $R_m=\R[x,y,z]_m$ for the homogeneous polynomials of degree $m$. In all polynomial rings below, a negative-degree homogeneous component is understood to be zero. Fix a linear form $h$ defining the original affine plane by $h=1$. Homogenization to degree $2r$ identifies $\mathcal P_{2r}$ with $R_{2r}$; the inverse is restriction to this affine plane. Thus polynomials of degree less than $2r$ are included in this identification. Let $p_+$ and $p_-$ denote the degree-$2r$ homogeneous forms corresponding to the polynomial pieces on the two sides of a shared edge. For any interior edge with supporting line $\ell_e=0$, the $C^r$ continuity condition is
\begin{equation}\label{eq:smooth}
 p_+-p_-=\ell_e^{r+1}q_e,\qquad q_e\in R_{r-1}.
\end{equation}
Indeed, in local affine coordinates with the edge given by $u=0$, the derivatives through order $r$ match if and only if the difference is divisible by $u^{r+1}$. Vanishing on the open edge segment implies the required polynomial coefficient identities on the whole line. Multiplication of $\ell_e$ by a nonzero constant merely rescales $q_e$ and does not affect ranks.

Fix each interior edge form $\ell_e$ once and use the same form in both endpoint equations. Orient the triangular face-adjacency graph. Summing \eqref{eq:smooth} around each interior vertex gives three conformality equations. A common choice of circuit orientation makes every central-edge term appear with opposite signs at its two endpoints. Put
\begin{equation}\label{eq:J}
 J_i=\sum_{e\ni A_i}\ell_e^{r+1}R_{r-1}\subset R_{2r},
 \qquad
 \Phi:R_{r-1}^{\,9}\longrightarrow J_1\oplus J_2\oplus J_3,
\end{equation}
where $\Phi(q)_i=\sum_{e\ni A_i}\varepsilon_{i,e}\ell_e^{r+1}q_e$, with signs determined by the oriented circuits. Thus $\varepsilon_{i,e}=-\varepsilon_{j,e}$ for a central edge joining $A_i$ and $A_j$.

\begin{lemma}\label{lem:global}
There is a linear isomorphism $S_{2r}^r(\MS)\cong R_{2r}\oplus\ker\Phi$.
\end{lemma}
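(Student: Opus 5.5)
The plan is the standard smoothing-cofactor argument. First we parametrize splines by one polynomial piece together with the cofactors; then we identify the resulting linear conditions with $\ker\Phi$.

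Fix the central triangle $\tau_0=[A_1,A_2,A_3]$ as a base face. Given $f\in S_{2r}^r(\MS)$, let $p_\tau\in R_{2r}$ be the homogenized piece on each triangle $\tau$. For each interior edge $e$, orient it consistently with the chosen orientation of the face-adjacency graph. By \eqref{eq:smooth}, $p_+-p_-=\ell_e^{r+1}q_e$ for a unique $q_e\in R_{r-1}$; uniqueness holds because $R$ is a domain. We define the map
\[
 \Psi:S_{2r}^r(\MS)\to R_{2r}\oplus R_{r-1}^{\,9},\qquad f\mapsto \bigl(p_{\tau_0},(q_e)_e\bigr).
\]

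\emph{Linearity and injectivity.} $\Psi$ is linear. It is injective because each triangle can be reached from $\tau_0$ through a path in the face-adjacency graph. Hence $p_\tau$ equals $p_{\tau_0}$ plus a signed sum of the terms $\ell_e^{r+1}q_e$ along the path, so $(p_{\tau_0},q)$ determines every piece.

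\emph{The image lies in $R_{2r}\oplus\ker\Phi$.} The interior vertex $A_i$ has four incident interior edges, and the four triangles around it form a cycle in the adjacency graph. Going once around $A_i$ in the chosen circuit orientation, the telescoping differences sum to zero. This gives $\Phi(q)_i=0$ with exactly the signs $\varepsilon_{i,e}$ of \eqref{eq:J}, so $q\in\ker\Phi$.

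\emph{Surjectivity.} Conversely, take $p_0\in R_{2r}$ and $q\in\ker\Phi$. We define the pieces by propagating from $\tau_0$ along paths in the adjacency graph. The key point, and the main thing to check carefully, is that this is well defined, i.e.\ independent of the chosen path. This amounts to showing that the cycle space of the face-adjacency graph (the dual graph on the interior edges) is generated by the three vertex circuits around $A_1,A_2,A_3$. The dual graph has $7$ vertices and $9$ edges, since only interior edges are dual edges, and it is connected. Its cycle space therefore has dimension $9-7+1=3$. The three vertex circuits are independent: each spoke $A_iV_j$ lies in exactly one of them, namely the circuit at $A_i$, because $V_j$ is a boundary vertex. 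So they form a basis, and any closed path has zero total increment by $q\in\ker\Phi$.

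Once the pieces are well defined, each piece lies in $R_{2r}$. Each interior edge satisfies \eqref{eq:smooth} by construction. By the local-coordinate remark after \eqref{eq:smooth}, the dehomogenized function is $C^r$ across every interior edge. Continuity of derivatives at the interior vertices is automatic, since it follows from edge-wise $C^r$ matching of polynomials. Dehomogenizing at $h=1$ therefore yields $f\in S_{2r}^r(\MS)$ with $\Psi(f)=(p_0,q)$.

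Hence $\Psi$ is an isomorphism onto $R_{2r}\oplus\ker\Phi$. The only delicate step is the sign consistency: the orientation conventions for the $q_e$ must match the $\varepsilon_{i,e}$, including $\varepsilon_{i,e}=-\varepsilon_{j,e}$ on central edges. This is exactly the common circuit orientation fixed before \eqref{eq:J}.
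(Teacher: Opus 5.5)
Your proposal is correct and follows essentially the same route as the paper: you parametrize a spline by one base piece together with the unique cofactors, telescope around each interior vertex to get $\Phi(q)=0$, and for the converse prove path-independence from the cycle-space count $9-7+1=3$. Independence of the three vertex circuits comes, as in the paper, from each spoke lying only in the circuit at its interior endpoint. The one step the paper states explicitly and you leave implicit is that the signed increment sum is linear in the cycle, so vanishing on a basis of the cycle space gives vanishing on every closed walk; this is harmless.
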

\begin{proof}
Fix an initial triangle $\tau_0$ and use the adjacency-edge orientations already chosen above. A spline determines its polynomial $p_{\tau_0}\in R_{2r}$ on this triangle. For every oriented adjacency edge, the difference between the terminal and initial face polynomials is $\ell_e^{r+1}q_e$ by \eqref{eq:smooth}. Each $q_e$ is unique since multiplication by the nonzero polynomial $\ell_e^{r+1}$ is injective. Summing the differences around each interior vertex gives zero, hence $\Phi(q)=0$. We thus obtain a linear map
\[
 \Psi:S_{2r}^r(\MS)\longrightarrow R_{2r}\oplus\ker\Phi,
 \qquad f\longmapsto(p_{\tau_0},q).
\]

Conversely, take $p_0\in R_{2r}$ and $q\in\ker\Phi$. For a triangle $\tau$, choose a simple adjacency path $\gamma$ from $\tau_0$ to $\tau$ and set
\[
 p_\tau=p_0+\sum_{e\in\gamma}\eta_e\ell_e^{r+1}q_e,
\]
where $\eta_e=1$ or $-1$ according as the path traverses the edge with or against its orientation. This definition is independent of the chosen path. Indeed, the adjacency graph is connected, with seven vertices and nine edges, so its circuit space has dimension $9-7+1=3$. The three circuits around the interior vertices are independent, since each contains a spoke belonging to no other such circuit. They form a circuit basis. Applying this circuit-space statement separately to each monomial coefficient, the condition $\Phi(q)=0$ therefore makes the sum of increments zero on every circuit and hence on the closed walk obtained by following one path and reversing another.

The resulting polynomials satisfy \eqref{eq:smooth} on every shared edge, so their derivatives through order $r$ match there. At a vertex this matching propagates through the adjacent triangles, giving the same derivatives for all incident pieces. They consequently define a $C^r$ spline. The reconstruction is linear and inverse to $\Psi$, which proves the isomorphism. 
\end{proof}

We compute $\dim J_i$ using the standard differential pairing (also called the apolar pairing), the pairing underlying Macaulay inverse systems; see \cite[Section 5]{Schenck}. Here we use ordinary derivatives, without divided-power normalization. Thus the factorials below are the values of the differential pairing on monomials. Here $\R[u,v]_n$ and $\R[U,V]_n$ are the real vector spaces of homogeneous polynomials of degree $n$ in the indicated variables. The uppercase variables are formal variables in a second polynomial ring. We identify this second space with the linear dual of the first by the differential pairing
\[
 \ap{F}{p}=p(\partial_U,\partial_V)F\in\R,
 \qquad F\in\R[U,V]_n,\quad p\in\R[u,v]_n.
\]
The differential operator has the same degree as $F$, so its value is a scalar. On monomials,
\[
 \ap{U^iV^{n-i}}{u^jv^{n-j}}=\delta_{ij}i!(n-i)!.
\]
The pairing is nondegenerate: its matrix in the monomial bases is diagonal with nonzero factorial entries. Thus the map $F\mapsto(p\mapsto\ap{F}{p})$ is an isomorphism
\[
 \R[U,V]_n\longrightarrow (\R[u,v]_n)^*.
\]
In particular, $\ap{F}{p}=0$ means that the linear functional represented by $F$ vanishes at $p$. It does not mean that either polynomial is zero. For a subspace $W\subseteq\R[u,v]_n$, write
\[
 W^\perp=\{F\in\R[U,V]_n:\ap{F}{p}=0\text{ for every }p\in W\}.
\]
Under this dual identification, $\dim W+\dim W^\perp=n+1$.

\begin{lemma}\label{lem:binary}
Let $r\ge1$ and $0\le k\le r-1$ be integers, and let $L_1,\ldots,L_4$ be nonzero, pairwise nonproportional binary linear forms. Put $n=r+1+k$. Then
\[
 \dim\sum_{\nu=1}^4L_\nu^{r+1}\R[u,v]_k
 =n+1-(r-3k-2)_+.
\]
\end{lemma}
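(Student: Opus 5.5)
The plan is to compute the dimension of the annihilator under the differential pairing and then use $\dim W+\dim W^\perp=n+1$. Put $W=\sum_{\nu=1}^4L_\nu^{r+1}\R[u,v]_k$. It then suffices to show $\dim W^\perp=(r-3k-2)_+$.

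\emph{Step 1 (adjointness).} For a linear form $L=au+bv$, write $D_L=a\partial_U+b\partial_V$. For $F\in\R[U,V]_n$ and $g\in\R[u,v]_k$ the pairing gives
\[
\ap{F}{L^{r+1}g}=g(\partial_U,\partial_V)\,D_L^{r+1}F=\ap{D_L^{r+1}F}{g}.
\]
The second pairing is taken in degree $k$, and it is nondegenerate. Hence $F\in W^\perp$ if and only if $D_{L_\nu}^{r+1}F=0$ for $\nu=1,\dots,4$.

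\emph{Step 2 (kernel of one operator).} Fix $L=au+bv\neq0$ and set $M_L=bU-aV$, so that $D_LM_L=0$. Make a linear change of the variables $U,V$ to $X,Y$ with $D_L=\partial_X$ and $Y=M_L$. A form of degree $n$ is annihilated by $\partial_X^{r+1}$ exactly when its degree in $X$ is at most $r$. For a homogeneous form this means divisibility by $Y^{n-r}=M_L^{k+1}$. Thus
\[
\ker D_L^{r+1}=M_L^{k+1}\R[U,V]_{n-k-1}.
\]

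\emph{Step 3 (intersection).} The $L_\nu$ are pairwise nonproportional, so the $M_{L_\nu}$ are pairwise nonproportional and hence pairwise coprime in the UFD $\R[U,V]$. The intersection of the four kernels therefore consists of the degree-$n$ forms divisible by $\prod_\nu M_{L_\nu}^{k+1}$, which has degree $4(k+1)$. Hence
\[
W^\perp=\Bigl(\prod_{\nu}M_{L_\nu}\Bigr)^{k+1}\R[U,V]_{n-4k-4}.
\]
Its dimension is $(n-4k-3)_+=(r-3k-2)_+$, using the convention that negative degrees give zero. Substituting into $\dim W=n+1-\dim W^\perp$ gives the formula.

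The only point requiring care is Step 2. The characterization of $\ker\partial_X^{r+1}$ by divisibility relies on homogeneity: a form with $X$-degree at most $r$ has every monomial divisible by $Y^{n-r}$. Once that is in place, the rest is bookkeeping.
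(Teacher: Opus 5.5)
Your proposal is correct and follows essentially the same route as the paper. Both arguments use adjointness to reduce annihilation to $D_{L_\nu}^{r+1}F=0$, then a linear change of variables showing $\ker D_L^{r+1}=M_L^{k+1}\R[U,V]_{r}$ (your $M_L=bU-aV$ is the paper's $H=\beta U-\alpha V$), and finally pairwise coprimality to obtain $W^\perp=\bigl(\prod_\nu M_{L_\nu}\bigr)^{k+1}\R[U,V]_{n-4k-4}$, of dimension $(r-3k-2)_+$.
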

\begin{proof}
Let $F\in\R[U,V]_n$. For $L=\alpha u+\beta v$ and $g\in\R[u,v]_k$, multiplication and differentiation are adjoint:
\[
 \ap{F}{L^{r+1}g}=
 \ap{(\alpha\partial_U+\beta\partial_V)^{r+1}F}{g}.
\]
Nondegeneracy in degree $k$ implies that $F$ annihilates $L^{r+1}\R[u,v]_k$ precisely when this directional derivative is zero. Write $(U,V)=t(\alpha,\beta)+\xi w$, where $w$ is independent of $(\alpha,\beta)$. The derivative becomes $\partial_t^{r+1}$, so a homogeneous degree-$n$ polynomial in its kernel has $t$-degree at most $r$ and is divisible by $\xi^{n-r}=\xi^{k+1}$. Equivalently it is divisible by $H^{k+1}$, where $H=\beta U-\alpha V$.

For the four different directions, the corresponding $H_\nu$ are pairwise coprime. Thus the common annihilator is exactly
\[
 (H_1H_2H_3H_4)^{k+1}\R[U,V]_{n-4(k+1)}.
\]
Its dimension is $(n-4(k+1)+1)_+=(r-3k-2)_+$. Subtracting from $\dim\R[u,v]_n=n+1$ proves the formula.
\end{proof}

\begin{proposition}\label{prop:local}
At each interior vertex, $\dim J_i=3Q_r-\sigma_r$.
\end{proposition}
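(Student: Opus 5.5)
The plan is to reduce the ternary computation at $A_i$ to the binary count of Lemma~\ref{lem:binary}, one degree at a time, by a linear change of homogeneous coordinates.

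\textbf{Step 1: coordinates centred at $A_i$.} Choose an invertible linear change of the coordinates $x,y,z$ sending the point $A_i$ to $[0:0:1]$. Such a change is a graded automorphism of $\R[x,y,z]$, so it preserves all dimensions. In the new coordinates $u,v,z$, each of the four edge forms $\ell_e$ with $e\ni A_i$ vanishes at $[0:0:1]$. It is therefore a binary linear form $L_\nu(u,v)$ with no $z$-term.

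By hypothesis the four supporting lines at $A_i$ are pairwise distinct. Hence $L_1,\dots,L_4$ are nonzero and pairwise nonproportional, which is exactly the hypothesis of Lemma~\ref{lem:binary}.

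\textbf{Step 2: grading by the $z$-degree.} Decompose
\[
R_{r-1}=\bigoplus_{k=0}^{r-1}z^{r-1-k}\R[u,v]_k,
\qquad
R_{2r}=\bigoplus_{n=0}^{2r}z^{2r-n}\R[u,v]_n .
\]
Since $L_\nu^{r+1}$ involves only $u,v$, multiplication by it sends the summand $z^{r-1-k}\R[u,v]_k$ into the summand $z^{r-1-k}\R[u,v]_{r+1+k}$. Different values of $k$ therefore land in different, independent graded pieces of $R_{2r}$. This gives the direct sum decomposition
\[
J_i=\bigoplus_{k=0}^{r-1}z^{r-1-k}\sum_{\nu=1}^4L_\nu^{r+1}\R[u,v]_k .
\]
The only point to check here is that the sum over $\nu$ respects this grading. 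It does, because each summand is homogeneous in $z$.

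\textbf{Step 3: apply Lemma~\ref{lem:binary} and sum.} For each $k$, put $n=r+1+k$. Lemma~\ref{lem:binary} gives that the $k$th piece has dimension $r+2+k-(r-3k-2)_+$. Summing over $k$ yields
\[
\dim J_i=\sum_{k=0}^{r-1}(r+2+k)-\sum_{k=0}^{r-1}(r-3k-2)_+ .
\]
The first sum is
\[
r(r+2)+\tfrac{r(r-1)}2=\tfrac{3r(r+1)}2=3Q_r .
\]
For the second sum, substitute $j=k+1$. The terms become $(r+1-3j)_+$, and those with $k\ge r$ vanish anyway, so it equals $\sigma_r$. This proves $\dim J_i=3Q_r-\sigma_r$.

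I expect no real obstacle. The one step needing care is the independence of the graded pieces in Step 2, which is what justifies applying the binary lemma separately in each degree.
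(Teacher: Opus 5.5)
Your proof is correct and follows the paper's argument. The paper works directly in the central-triangle coordinates with $A_1=[1:0:0]$, so the four edge forms at $A_1$ involve only $y,z$. It splits $J_1$ by $x$-degree as $\bigoplus_{k=0}^{r-1}x^{r-1-k}\sum_{e\ni A_1}\ell_e^{r+1}\R[y,z]_k$, applies Lemma~\ref{lem:binary} in each degree, sums, and handles the other vertices by permuting coordinates. You do exactly this with $z$ in place of $x$ after an explicit coordinate change, and you also carry out the two sums (giving $3Q_r$, and $\sigma_r$ via $j=k+1$) that the paper states in one line.
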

\begin{proof}
Use homogeneous coordinates relative to the central triangle, so that $A_1=[1:0:0]$, $A_2=[0:1:0]$, and $A_3=[0:0:1]$. At $A_1$, all four incident linear forms involve only $y,z$. Decomposing the multiplier by its $x$-degree gives the direct sum
\[
 J_1=\bigoplus_{k=0}^{r-1}x^{r-1-k}
 \left(\sum_{e\ni A_1}\ell_e^{r+1}\R[y,z]_k\right).
\]
Lemma~\ref{lem:binary} and the distinct-direction hypothesis yield
\[
 \dim J_1=\sum_{k=0}^{r-1}\bigl(r+k+2-(r-3k-2)_+\bigr)
 =\frac{3r(r+1)}2-\sigma_r.
\]
The other vertices are treated by permuting coordinates.
\end{proof}

Let $\coker\Phi=(J_1\oplus J_2\oplus J_3)/\operatorname{im}\Phi$. By Lemma~\ref{lem:global} and rank--nullity,
\[
 \dim S_{2r}^r(\MS)=N_r+\dim\ker\Phi
 =N_r+9Q_r-\rank\Phi.
\]
Proposition~\ref{prop:local} gives
\[
 \dim\coker\Phi=\sum_{i=1}^3\dim J_i-\rank\Phi
 =9Q_r-3\sigma_r-\rank\Phi.
\]
Substituting this expression and using $B_r=N_r+3\sigma_r$, we obtain
\begin{equation}\label{eq:cokernel}
 \dim S_{2r}^r(\MS)
 =B_r+\dim\coker\Phi.
\end{equation}
Consequently, to prove the lower bound $B_r+1$, it suffices to construct a nonzero linear functional on $J_1\oplus J_2\oplus J_3$ that vanishes on $\operatorname{im}\Phi$.

\section{Proof of the main theorem}\label{sec:proof}
We now construct the nonzero cokernel functional required by \eqref{eq:cokernel}. Subsection~\ref{sec:normalization} uses the concurrence condition to normalize the coordinates and constructs a quadratic form through the six dual spoke points. Subsection~\ref{sec:functional} takes its $r$th power, proves the annihilation and nonvanishing properties, and completes the dimension argument using Diener's upper bound.

\subsection{Coordinate normalization and the dual conic}\label{sec:normalization}
Under the concurrence condition, a diagonal coordinate change puts the boundary vertices into a convenient form. We then construct a quadratic form vanishing at the six dual spoke points; its power will define the functional in the next subsection.

\begin{lemma}\label{lem:normalize}
If $S=T$, an invertible homogeneous coordinate change gives
\[
 A_1=[1:0:0],\quad A_2=[0:1:0],\quad A_3=[0:0:1],
\]
\[
 V_1=[a:1:1],\quad V_2=[1:b:1],\quad V_3=[1:1:c],
 \qquad abc\ne0.
\]
This change preserves the cofactor equations and spline dimension.
\end{lemma}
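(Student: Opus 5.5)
We work in the homogeneous coordinates relative to the central triangle used in Proposition~\ref{prop:local}, so that $A_1,A_2,A_3$ are already the coordinate points and, by \eqref{eq:bary}, $V_i$ has coordinates $\rho_i,s_i,t_i$ placed cyclically. Explicitly,
\[
V_1=[\rho_1:s_1:t_1],\qquad V_2=[t_2:\rho_2:s_2],\qquad V_3=[s_3:t_3:\rho_3].
\]
We then look for a diagonal change $x\mapsto\lambda_1x$, $y\mapsto\lambda_2y$, $z\mapsto\lambda_3z$ with all $\lambda_j\ne0$. Such a change fixes the three coordinate points and rescales the coordinates of each $V_i$. Since each $V_i$ is only a projective point, we may also rescale its coordinate vector by $\mu_i\ne0$. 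Making the two off-diagonal entries of each $V_i$ equal to $1$ requires
\[
\lambda_2s_1=\lambda_3t_1,\qquad \lambda_3s_2=\lambda_1t_2,\qquad \lambda_1s_3=\lambda_2t_3.
\]
These give $\lambda_3/\lambda_2=s_1/t_1$, $\lambda_1/\lambda_3=s_2/t_2$ and $\lambda_2/\lambda_1=s_3/t_3$. The product of the three ratios must equal $1$, which is exactly $S=T$. Conversely, if $S=T$, set $\lambda_2=1$, $\lambda_3=s_1/t_1$ and $\lambda_1=\lambda_3s_2/t_2$; the third equation then holds automatically. All $\lambda_j$ are nonzero because $s_i,t_i\ne0$. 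After dividing each $V_i$ by its common off-diagonal value, we obtain $a=\lambda_1\rho_1/(\lambda_2s_1)$, and similarly for $b$ and $c$. These are nonzero because $\rho_i<0$.

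Next we check that the cofactor setup is invariant. An invertible linear map $g$ on $\R^3$ induces graded automorphisms of $R_m$, given by $p\mapsto p\circ g^{-1}$, in every degree. It carries each edge form $\ell_e$ to a linear form $\ell_e'$ defining the image edge, and $\ell_e^{r+1}R_{r-1}$ to $(\ell_e')^{r+1}R_{r-1}$. Hence $\Phi$ and its transformed version $\Phi'$ are conjugate by isomorphisms of the source $R_{r-1}^9$ and the target $J_1\oplus J_2\oplus J_3$. The two maps therefore have equal rank and cokernel dimension.

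Since \eqref{eq:cokernel} expresses $\dim S_{2r}^r(\MS)$ through $\coker\Phi$ together with the combinatorial quantity $B_r$, the dimension is unchanged. Alternatively, if $g$ preserves the affine chart, it acts as an affine map, and affine maps preserve $C^r$ splines. The only point needing care is that the new coordinates need not correspond to an affine chart, since the plane $h=1$ is moved. The cofactor formulation avoids this problem: Lemma~\ref{lem:global} and \eqref{eq:cokernel} depend only on the homogeneous data, and that data transforms equivariantly.

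The main step to get right is the bookkeeping. The diagonal scaling and the projective rescaling of each $V_i$ together have enough freedom exactly when the cyclic product condition $S=T$ holds. I expect no real obstacle beyond verifying that invariance is stated at the level of $\Phi$ rather than of affine splines.
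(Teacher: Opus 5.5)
Your proposal is correct and follows the paper's proof: you use the same diagonal scaling with the compatibility equations $\lambda_2s_1=\lambda_3t_1$, $\lambda_3s_2=\lambda_1t_2$, $\lambda_1s_3=\lambda_2t_3$ (your $\lambda$'s are a scalar multiple of the paper's), these are solvable exactly when $S=T$, and $a,b,c\ne0$ follows from $\rho_i\ne0$. For invariance you conjugate $\Phi$ by the induced graded automorphism, whereas the paper transports the affine chart to $h(D^{-1}u)=1$; both arguments work, and that transported chart shows the new coordinates do have an affine chart, so your concern about the moved plane is unnecessary.
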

\begin{proof}
Initially the three boundary vertices have representatives $(\rho_1,s_1,t_1)$, $(t_2,\rho_2,s_2)$, and $(s_3,t_3,\rho_3)$. Apply $D=\operatorname{diag}(\lambda_1,\lambda_2,\lambda_3)$, where
\[
 \lambda_1=1,\qquad \lambda_3=t_2/s_2,\qquad
 \lambda_2=t_2t_1/(s_2s_1).
\]
Then $\lambda_2s_1=\lambda_3t_1$ and $\lambda_3s_2=\lambda_1t_2$. The condition $S=T$ gives $\lambda_1s_3=\lambda_2t_3$. The transformed representatives are $(\rho_1,\lambda_2s_1,\lambda_3t_1)$, $(t_2,\lambda_2\rho_2,\lambda_3s_2)$, and $(s_3,\lambda_2t_3,\lambda_3\rho_3)$. Divide them by their common second and third, first and third, and first and second coordinates, respectively. The resulting parameters are
\begin{equation}\label{eq:abc}
 a=\frac{\rho_1s_2}{t_2t_1},\qquad
 b=\frac{\rho_2t_1}{s_2s_1},\qquad
 c=\frac{\rho_3t_2}{s_2s_3}.
\end{equation}
All denominators and all $\rho_i$ are nonzero, so $abc\ne0$.

If the original affine chart is $h(v)=1$ and $u=Dv$, use the transported chart $h(D^{-1}u)=1$. The change on homogeneous face polynomials is $p(v)\mapsto p(D^{-1}u)$. It is a linear isomorphism and preserves the divisibility relations \eqref{eq:smooth}. The affine chart is transported together with the coordinates, so the argument is valid regardless of the signs of the diagonal entries.
\end{proof}

For the remainder of the construction, assume $S=T$ and use the coordinates of Lemma~\ref{lem:normalize}. Choose the following linear forms for the six spokes:
\begin{center}
\begin{tabular}{c c c c c c}
\toprule
$A_1V_2$&$A_1V_3$&$A_2V_1$&$A_2V_3$&$A_3V_1$&$A_3V_2$\\
$y-bz$&$cy-z$&$x-az$&$cx-z$&$x-ay$&$bx-y$\\\bottomrule
\end{tabular}
\end{center}
The central edges $A_1A_2,A_2A_3,A_3A_1$ have equations $z,x,y$, respectively. Let $X,Y,Z$ be formal variables in a second ternary polynomial ring. A form in $\R[X,Y,Z]_m$ represents a linear functional on $R_m=\R[x,y,z]_m$ through the differential pairing defined in Subsection~\ref{sec:functional}. In this second ring, define the quadratic form
\begin{equation}\label{eq:C}
 C(X,Y,Z)=aX^2+bY^2+cZ^2
 +(ab+1)XY+(ac+1)XZ+(bc+1)YZ.
\end{equation}
The identities
\begin{align*}
 C(0,Y,Z)&=(bY+Z)(Y+cZ),\\
 C(X,0,Z)&=(aX+Z)(X+cZ),\\
 C(X,Y,0)&=(aX+Y)(X+bY)
\end{align*}
show that $C$ vanishes at each of the six spoke coefficient vectors. For example, $y-bz$ has coefficient vector $(0,1,-b)$, and the first factorization gives $C(0,1,-b)=0$. A nonzero form $\alpha x+\beta y+\gamma z$ determines the dual projective point $[\alpha:\beta:\gamma]$; this is the meaning of a dual spoke point. On the three central coefficient vectors its values are
\begin{equation}\label{eq:nonzeroC}
 C(1,0,0)=a,\qquad C(0,1,0)=b,\qquad C(0,0,1)=c.
\end{equation}
No irreducibility or nonsingularity of $C$ is used in the proof.

\subsection{The annihilating functional and proof of the theorem}\label{sec:functional}
We use the $r$th power of the quadratic form to construct a functional that annihilates the cofactor map. Its nonvanishing on the target gives the lower bound, which is matched by Diener's upper bound.

The space $\R[X,Y,Z]_{2r}$ consists of homogeneous forms of degree $2r$ in $X,Y,Z$. As in the binary case, it is identified with $R_{2r}^*$ by
\[
 \ap{F}{p}=p(\partial_X,\partial_Y,\partial_Z)F\in\R.
\]
On monomials, the pairing is
\[
 \ap{X^iY^jZ^k}{x^{i'}y^{j'}z^{k'}}
 =\delta_{ii'}\delta_{jj'}\delta_{kk'}i!j!k!.
\]
Thus $C$ is a quadratic form in the second polynomial ring, while $F=C^r\in\R[X,Y,Z]_{2r}$ represents the linear functional $\Lambda_r(p)=\ap{F}{p}$ on $R_{2r}$. The condition $\Lambda_r(p)=0$ means that this functional vanishes at $p$; its vanishing on an entire subspace is the annihilation property used below.

\begin{lemma}\label{lem:annihilate}
Let $F=C^r$. If $\ell=\alpha x+\beta y+\gamma z$ and $C(\alpha,\beta,\gamma)=0$, then
\[
 \Lambda_r(\ell^{r+1}g)=0\qquad(g\in R_{r-1}).
\]
\end{lemma}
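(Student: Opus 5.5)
We will use the adjointness of multiplication and differentiation, exactly as in the proof of Lemma~\ref{lem:binary}, now for three variables. For any $p\in R_{2r-(r+1)}=R_{r-1}$ and $F\in\R[X,Y,Z]_{2r}$, the pairing satisfies
\[
 \ap{F}{\ell^{r+1}g}=\ap{D_\ell^{\,r+1}F}{g},\qquad D_\ell=\alpha\partial_X+\beta\partial_Y+\gamma\partial_Z .
\]
This holds because $(\ell^{r+1}g)(\partial_X,\partial_Y,\partial_Z)=g(\partial)\,D_\ell^{r+1}$ and constant-coefficient differential operators commute. So it suffices to show $D_\ell^{r+1}(C^r)=0$ as a polynomial whenever $C(\alpha,\beta,\gamma)=0$.

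For this we restrict $C$ to lines in the direction $w=(\alpha,\beta,\gamma)$. Fix any point $P\in\R^3$ and set $\phi(t)=C(P+tw)$. Since $C$ is quadratic,
\[
 \phi(t)=C(P)+t\,B(P,w)+t^2C(w),
\]
where $B$ is the associated bilinear form (the polarization, $B(P,w)=\nabla C(P)\cdot w$). The hypothesis $C(w)=0$ kills the quadratic term, so $\phi$ has degree at most $1$ in $t$. Hence $C(P+tw)^r=\phi(t)^r$ is a polynomial of degree at most $r$ in $t$, and its $(r+1)$st derivative in $t$ vanishes identically. But $\frac{d^{r+1}}{dt^{r+1}}F(P+tw)\big|_{t=0}=(D_\ell^{r+1}F)(P)$. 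Since $P$ is arbitrary, $D_\ell^{r+1}C^r\equiv0$ on $\R^3$, and therefore it vanishes as a polynomial.

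Combining the two steps gives $\Lambda_r(\ell^{r+1}g)=\ap{C^r}{\ell^{r+1}g}=\ap{0}{g}=0$ for all $g\in R_{r-1}$. The main point needing care is the adjointness identity, specifically that degrees match so the pairing in degree $r-1$ makes sense. This is the same verification as in Lemma~\ref{lem:binary}: on monomials, multiplication by $x$ is adjoint to $\partial_X$ under the unnormalized differential pairing. No property of $C$ beyond $C(w)=0$ is needed.
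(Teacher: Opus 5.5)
Your proof is correct and follows essentially the same route as the paper. The hypothesis $C(w)=0$ makes $C(P+tw)$ affine in $t$, which forces $D_\ell^{r+1}C^r=0$, and adjointness of multiplication by $\ell^{r+1}$ with $D_\ell^{r+1}$ then gives the claim. There is one small slip: in your quantifier ``for any $p\in R_{r-1}$'', the variable should be $g$.
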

\begin{proof}
Put $v=(\alpha,\beta,\gamma)$, $\mathbf X=(X,Y,Z)$, and $D_v=\alpha\partial_X+\beta\partial_Y+\gamma\partial_Z$. Since $C$ is quadratic and $C(v)=0$,
\[
 C(\mathbf X+tv)=C(\mathbf X)+tL_v(\mathbf X)
\]
for a linear form $L_v$. Therefore $F(\mathbf X+tv)=(C(\mathbf X)+tL_v(\mathbf X))^r$ has degree at most $r$ in $t$. By the directional derivative identity,
\[
 D_v^{r+1}F(\mathbf X)=
 \left.\frac{d^{r+1}}{dt^{r+1}}F(\mathbf X+tv)\right|_{t=0}=0.
\]
Multiplication and differentiation are adjoint under the differential pairing, as in Lemma~\ref{lem:binary}. Hence, for every $g\in R_{r-1}$,
\[
 \Lambda_r(\ell^{r+1}g)=\ap{F}{\ell^{r+1}g}
 =\ap{D_v^{r+1}F}{g}=0.
\]
\end{proof}

\begin{proof}[Proof of Theorem~\ref{thm:positive}]
Assume $S=T$ and use the normalized coordinates above. Define a linear functional on $J_1\oplus J_2\oplus J_3$ by
\begin{equation}\label{eq:Z}
 Z_r(f_1,f_2,f_3)=\Lambda_r(f_1)+\Lambda_r(f_2)+\Lambda_r(f_3),
 \qquad f_i\in J_i.
\end{equation}
For an edge $e$, let $\iota_e g\in R_{r-1}^{\,9}$ have component $g\in R_{r-1}$ in the factor indexed by $e$ and zero elsewhere, and put $p_e=\ell_e^{r+1}g$ for every interior edge. Figure~\ref{fig:ms} distinguishes the two types of contributions. For a spoke incident to $A_i$, the only nonzero target component is $\varepsilon_{i,e}p_e$, and Lemma~\ref{lem:annihilate} gives
\begin{equation}\label{eq:spokezero}
 Z_r(\Phi(\iota_e g))=\varepsilon_{i,e}\Lambda_r(p_e)=0
 \qquad(e=A_iV_j,\ i\ne j).
\end{equation}
For a central edge $e=A_iA_j$, the endpoint signs satisfy $\varepsilon_{j,e}=-\varepsilon_{i,e}$. Consequently,
\begin{equation}\label{eq:centralcancel}
 Z_r(\Phi(\iota_e g))
 =\varepsilon_{i,e}\Lambda_r(p_e)-\varepsilon_{i,e}\Lambda_r(p_e)=0.
\end{equation}
Every cofactor vector is a sum of such single-edge vectors, so \eqref{eq:spokezero} and \eqref{eq:centralcancel} imply
\begin{equation}\label{eq:ZPhi}
 Z_r\circ\Phi=0.
\end{equation}

To show that the restriction of $Z_r$ to the target space is nonzero, use the central edge $A_2A_3$, whose equation is $x=0$. We have
\begin{equation}\label{eq:witness}
 x^{2r}=x^{r+1}x^{r-1}\in J_2,\qquad
 Z_r(0,x^{2r},0)=(2r)!a^r\ne0.
\end{equation}
Here $(0,x^{2r},0)$ is an element of the target space used to test the functional; it is not the contribution of a single central edge. For that edge, $\Phi$ produces two opposite components, whose evaluations cancel as in \eqref{eq:centralcancel}.

By \eqref{eq:ZPhi} and \eqref{eq:witness}, the nonzero functional $Z_r$ annihilating $\operatorname{im}\Phi$ descends to a nonzero functional $\overline Z_r$ on $\coker\Phi$. Hence \eqref{eq:cokernel} implies
\[
 \dim S_{2r}^r(\MS)\ge B_r+1.
\]
Diener's bound \cite[Theorem 1, p.~544]{Diener}, applied to the original embedded configuration, is $\dim S_{2r}^r(\MS)\le B_r+1$. Its noncollinearity and geometric hypotheses are included in Section~\ref{sec:main}. Together the inequalities prove the theorem. In particular $\dim\coker\Phi=1$, and $\overline Z_r$ spans its dual.
\end{proof}

\section{Matrix representation and an example}\label{sec:matrix}
Throughout this section, $S=T$ and the normalized coordinates of Lemma~\ref{lem:normalize} are used. This section expresses the constructed functional as a nonzero left-kernel vector of the cofactor matrix and explains how to form that matrix. This gives a coordinate realization of the preceding proof.

For the matrix representation, fix a monomial order in $R_{2r}$ and use a monomial basis in each of the nine factors $R_{r-1}$ in the domain. Write
\[
 F=\sum_{i+j+k=2r}c_{ijk}X^iY^jZ^k,
 \qquad (w_r)_{ijk}=i!j!k!c_{ijk}.
\]
Here $\coeff(p)$ is the column of coefficients of $p$ in the chosen monomial order. Thus $\Lambda_r(p)=w_r\coeff(p)$. For each $J_i$, let $T_i$ have as columns the monomial coefficient vectors of a basis of $J_i$. If $M$ is the matrix of $\Phi$ in these bases and $M_{\rm mono}$ uses full monomial coordinates in each component, then
\[
 P=\operatorname{diag}(T_1,T_2,T_3),\quad M_{\rm mono}=PM.
\]
The row representing $Z_r$ in the full monomial coordinates is $z_{\rm mono}=(w_r\mid w_r\mid w_r)$. Since $Z_r\circ\Phi=0$, its matrix expression is
\[
 z_{\rm mono}M_{\rm mono}=0.
\]
Passing to the chosen bases of the actual target spaces gives
\[
 z=z_{\rm mono}P=(w_rT_1\mid w_rT_2\mid w_rT_3),
 \qquad zM=z_{\rm mono}PM=z_{\rm mono}M_{\rm mono}=0.
\]
Moreover $z\ne0$ by \eqref{eq:witness}. It is this nonvanishing after restriction to $J_1\oplus J_2\oplus J_3$ that certifies a nonzero cokernel functional.
Put $h_i=\dim J_i=3Q_r-\sigma_r$ and $H_r=h_1+h_2+h_3$. Then
\[
 T_i\in\R^{N_r\times h_i},\qquad
 M_{\rm mono}\in\R^{3N_r\times9Q_r},\qquad
 M\in\R^{H_r\times9Q_r}.
\]
To construct the matrices, for each edge $e$ and each degree-$(r-1)$ monomial $m$, first expand $p=\ell_e^{r+1}m$ and form its full monomial coefficient column $\mathbf b=\coeff(p)$. In $M_{\rm mono}$, a spoke column has one nonzero vertex block, while a central-edge column has blocks $\mathbf b$ and $-\mathbf b$ at its two endpoints, up to the common sign determined by the circuit orientations.

Next convert each nonzero block to coordinates in the chosen basis of $J_i$: let $\mu_i$ be the unique column satisfying
\[
 T_i\mu_i=\mathbf b.
\]
A solution exists because $p\in J_i$, and is unique because the columns of $T_i$ form a basis of $J_i$. The block inserted into $M$ is $\mu_i$, with the appropriate sign, not $\mathbf b$. In particular, a central edge contributes $\mu_i$ and $-\mu_j$ in its two endpoint blocks; these are coordinates in two different local bases.

This gives $M_{\rm mono}=PM$. Since $P$ is injective, the matrices have the same rank, but their left-kernel dimensions differ by $3N_r-H_r$. More precisely, $\dim\ker M^{\mathsf T}=H_r-\rank M=\dim\coker\Phi$, which is the dimension increase in \eqref{eq:cokernel}. The nonvanishing of $Z_r$ on $J_1\oplus J_2\oplus J_3$ is what ensures $z\ne0$.

\begin{example}[The quadratic case]
For $r=1$, choose the local bases
\[
 J_1:(y^2,yz,z^2),\quad J_2:(x^2,xz,z^2),\quad
 J_3:(x^2,xy,y^2).
\]
The left-kernel vector is
\[
 z=(2b,bc+1,2c\mid 2a,ac+1,2c\mid 2a,ab+1,2b).
\]
For example, its first block on the spoke $y-bz$ gives
\[
 (2b,bc+1,2c)(1,-2b,b^2)^T
 =2b-2b(bc+1)+2cb^2=0.
\]
The central edge $z=0$ contributes equal and opposite endpoint evaluations $2c$. The higher-order construction follows the same pattern, with the single replacement $C\mapsto C^r$ and the corresponding factorial weights.

\end{example}

\section{Conclusion}\label{sec:conclusion}
The concurrence criterion for the quadratic $C^1$ Morgan--Scott space extends to $S_{2r}^r(\MS)$ for every $r\ge1$: concurrence of the three auxiliary lines implies a dimension increase of exactly one above the Schumaker lower bound. This establishes the positive branch of Diener's conjecture beyond completely symmetric configurations. Together with the known necessary conditions and the negative-branch exclusion of Deng, Feng, and Kozak, it gives their revised necessary and sufficient criterion for $r>2$; $r=2$ retains the additional negative branch.

The construction explains how the quadratic geometry persists in higher orders. A conic through the six dual spoke points gives a form $C$ whose power $C^r$ annihilates each spoke contribution under the differential pairing. The central-edge contributions cancel between their two endpoints. Nonvanishing on the target space yields the required cokernel functional, and Diener's upper bound fixes the dimension increase at one. The matrix representation makes this functional explicit as a nonzero left-kernel vector.

The result concerns the same nondegenerate standard MS configuration, with four distinct edge directions at each interior vertex, as the cited dimension bounds. Degenerate triangles and repeated directions require separate local dimension calculations. The present construction gives a dual certificate for the dimension increase.

\section*{Acknowledgements}
\begingroup\small\emergencystretch=1em
This work was supported by the National Key R\&D Program of China (Nos. 2022YFA1005200 and 2022YFA1005201), the NSF of China (No. 12171453), and the Major Project of Science and Technology Innovation Tackling Plan of Anhui Province (No. 202423e09050003).
\par\endgroup

\section*{Declarations}
The authors declare that they have no conflict of interest.

\end{document}